\newtheorem{proposition}{Proposition}
\newtheorem{conjecture}{Conjecture}
\newtheorem{question}{Question}
\newtheorem{corollary}{Corollary}
\begin{document}
\baselineskip=17pt

\title{Representation of positive integers by the form
$x_1...x_k+x_1+...+x_k$}

\author{Vladimir Shevelev}
\address{Department of Mathematics \\Ben-Gurion University of the
 Negev\\Beer-Sheva 84105, Israel. e-mail:shevelev@bgu.ac.il}

\subjclass{11N32.}

\begin{abstract}
 For an arbitrary given $k\geq3,$ we consider a possibility of representation of a
 positive number $n$ by the form $x_1...x_k+x_1+...+x_k,
 \enskip 1\leq x_1\leq ... \leq x_k.$ We also study a question on the smallest
 value of $k\geq3$ in such a representation.
\end{abstract}
\maketitle

\section{Introduction}
In 2002, R. Zumkeller published in OEIS the sequence A072670: "Number of ways to write
 $n$ as $ij+i+j,\enskip 0<i<=j$". This sequence possesses a remarkable property.
\begin{proposition}\label{prop1}
Positive integer $n$ is not represented by the form $ij+i+j,\enskip 0<i<=j,$ if
and only if $n=p-1,$ where $p$ is prime.
\end{proposition}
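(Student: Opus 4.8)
The plan is to reduce this additive-multiplicative form to a pure product via the elementary factorization identity
$$ij+i+j=(i+1)(j+1)-1.$$
Adding $1$ to both sides of $n=ij+i+j$ converts the representation question into a factorization question about $n+1$, which is where all the content lies.

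First I would record the identity above and observe that $n=ij+i+j$ holds for some integers $i,j$ if and only if $n+1=(i+1)(j+1)$. Under the constraint $0<i\le j$ we have $i+1\ge 2$ and $j+1\ge 2$ with $i+1\le j+1$, so a valid representation of $n$ corresponds exactly to a factorization $n+1=ab$ with $2\le a\le b$; conversely, given such a factorization I set $i=a-1$ and $j=b-1$ to recover an admissible pair. Thus $n$ is representable precisely when $n+1$ admits a factorization into two factors each at least $2$, i.e.\ when $n+1$ is composite. This correspondence is the crux of the argument.

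For the forward direction of the stated equivalence, suppose $n$ is not representable. Since $n\ge 1$ we have $n+1\ge 2$, and by the previous paragraph $n+1$ has no factorization into two factors each $\ge 2$; the only integers $\ge 2$ with this property are the primes, so $n+1=p$ and hence $n=p-1$. Conversely, if $n=p-1$ for a prime $p$, then $n+1=p$ cannot be written as $ab$ with $2\le a\le b$, so no admissible pair $(i,j)$ exists and $n$ is not representable. This establishes both implications.

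The argument is essentially immediate once the identity is in hand, so there is no serious obstacle to overcome. The only points that require care are the boundary bookkeeping: verifying that the constraint $0<i\le j$ matches exactly the factor bounds $2\le a\le b$, and that $n\ge 1$ forces $n+1\ge 2$, so that ``not composite'' genuinely means ``prime'' and does not spuriously include the unit $n+1=1$.
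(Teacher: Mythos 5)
Your proof is correct and follows essentially the same route as the paper: both rest on the identity $ij+i+j=(i+1)(j+1)-1$, translating representability of $n$ into compositeness of $n+1$, with the pair $(i,j)=(a-1,b-1)$ recovering a representation from a factorization $n+1=ab$, $2\le a\le b$. Your version merely packages the two implications as a single explicit bijection and is slightly more careful about the boundary case $n+1\ge 2$; there is nothing to correct.
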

\begin{proof}
Condition $n=p-1$ is sufficient, since if $n=ij+i+j,$ then $n+1=(i+1)(j+1)$ cannot
be prime. Thus $n$ of the form $p-1$ is not represented by the form
$ij+i+j,\enskip 0<i<=j.$ \enskip Suppose that, conversely, $n$  is not represented by this
form. Show that $n+1$ is prime. If $n+1\geq4$ is composite, then $n+1=rs, \enskip
 s\geq r\geq2.$ Set $i=r-1, \enskip j=s-1.$ We have
 $$ij+i+j=(r-1)(s-1)+(r-1)+(s-1)=n+1-1=n.$$
This contradicts the supposition. So $n+1$ is prime.
 \end{proof}
In this note, for an arbitrary given $k\geq3,$ we consider a more general form
 $x_1...x_k+x_1+...+x_k,\enskip 1\leq x_1\leq ... \leq x_k.$ In particular, we study
  a question on the smallest value of $k\geq3$ in a a possible representation of $n.$
\section{Necessary condition for non-representation of n}
Denote by $\nu_k(n)$ the number of ways to write $n$ by the
$$F_k=F(x_1,...,x_k)=$$
\begin{equation}\label{1}
x_1...x_k+x_1+...+x_k,
 \enskip 1\leq x_1\leq ... \leq x_k,\enskip k\geq3.
\end{equation}
\begin{proposition}\label{prop2}
If, for a given $k\geq3,$ for $n\geq k-1$ we have $\nu_k(n)=0,$ then $n-k+3$ is
prime.
\end{proposition}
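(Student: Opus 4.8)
The plan is to prove the contrapositive and to reduce the $k$-variable form directly to the two-variable case of Proposition \ref{prop1} by setting $k-2$ of the variables equal to $1$. First I would record that the hypothesis $n\geq k-1$ guarantees that $m:=n-k+2$ is a positive integer, so that Proposition \ref{prop1} is applicable to $m$, and simultaneously that $n-k+3=m+1\geq2$, so that the conclusion ``$n-k+3$ is prime'' is meaningful.

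The central computation is the padding identity: evaluating $F_k$ at $x_1=\cdots=x_{k-2}=1$, $x_{k-1}=i$, $x_k=j$ with $1\leq i\leq j$ gives
\[
F_k=ij+(k-2)+i+j=(ij+i+j)+(k-2).
\]
Hence $n$ admits a representation of this special padded shape precisely when $m=n-k+2$ is representable as $ij+i+j$ with $0<i\leq j$. Note that the padded tuple automatically satisfies the monotonicity constraint $1=x_{k-2}\leq x_{k-1}=i\leq x_k=j$, so it is a legitimate representation counted by $\nu_k$.

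Now I would assume $n-k+3$ is \emph{not} prime; equivalently $m+1$ is not prime, so $m$ is not of the form $p-1$. By the ``only if'' direction of Proposition \ref{prop1}, $m$ therefore \emph{is} representable as $ij+i+j$ for some $1\leq i\leq j$. Padding with $k-2$ ones then produces a valid tuple with $F_k=n$, whence $\nu_k(n)\geq1$. Taking the contrapositive yields exactly the claim: if $\nu_k(n)=0$ then $n-k+3$ must be prime.

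I do not expect a genuine obstacle, since the argument is essentially this one-line padding reduction to Proposition \ref{prop1}; the only points needing care are the boundary bookkeeping (confirming that $n\geq k-1$ is exactly what makes both $m\geq1$ and $n-k+3\geq2$ hold) and the verification of the ordering constraints just noted. I would finally stress that only the necessary direction is available here: a general representation of $n$ need not be of the padded form, so the converse of Proposition \ref{prop2} genuinely fails, which is precisely why it is stated as a one-way implication rather than an ``if and only if'' in the style of Proposition \ref{prop1}.
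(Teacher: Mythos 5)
Your proof is correct and is essentially the paper's own argument: both reduce to the two-variable case by padding with $x_1=\cdots=x_{k-2}=1$, the only cosmetic difference being that you invoke Proposition \ref{prop1} as a black box (in contrapositive form) where the paper inlines the same factorization $n-k+3=rs$, $x_{k-1}=r-1$, $x_k=s-1$ as a proof by contradiction. Your boundary bookkeeping ($n\geq k-1$ giving $m=n-k+2\geq1$) and the ordering check are fine, so nothing is missing.
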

\begin{proof}
If $n-k+3\geq4$ is composite, then $n-k+3=rs, \enskip
 s\geq r\geq2.$ Set $x_i=1\enskip for \enskip i=1,...,k-2$ and $x_{k-1}=r-1, \enskip x_k=s-1.$
 We have
 $$F_k=(r-1)(s-1)+(k-2)+(r-1)+(s-1)=(n-k+3)+(k-2)-1=n.$$
This contradicts the condition $\nu_k(n)=0.$ So $n-k+3$ is prime.
\end{proof}
\begin{proposition}\label{prop3}
If $k_1<k_2$ and $\nu_{k_1}(n)>0,$ then $\nu_{k_2}(n+k_2-k_1)>0.$
\end{proposition}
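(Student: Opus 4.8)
The plan is to exploit the fact that multiplying a product by extra factors of $1$ leaves the product unchanged while each such factor adds exactly $1$ to the sum. Concretely, since $\nu_{k_1}(n)>0$, I would fix a representation
$$n=x_1\cdots x_{k_1}+x_1+\cdots+x_{k_1},\qquad 1\leq x_1\leq\cdots\leq x_{k_1},$$
and then build a $k_2$-tuple by prepending $k_2-k_1$ copies of $1$ to it, i.e.\ set $y_1=\cdots=y_{k_2-k_1}=1$ and $y_{k_2-k_1+j}=x_j$ for $j=1,\ldots,k_1$.

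The verification has two parts. First, the ordering constraint $1\leq y_1\leq\cdots\leq y_{k_2}$ must hold; this is immediate, because the prepended entries all equal $1$ and $1\leq x_1$, so the chain of inequalities is preserved. Second, I would compute $F_{k_2}(y_1,\ldots,y_{k_2})$: the product is $1^{\,k_2-k_1}\cdot x_1\cdots x_{k_1}=x_1\cdots x_{k_1}$, unaffected by the extra unit factors, while the sum becomes $(k_2-k_1)+x_1+\cdots+x_{k_1}$. Hence
$$F_{k_2}(y_1,\ldots,y_{k_2})=x_1\cdots x_{k_1}+(k_2-k_1)+x_1+\cdots+x_{k_1}=n+k_2-k_1.$$
This exhibits at least one representation of $n+k_2-k_1$ in the form $F_{k_2}$, so $\nu_{k_2}(n+k_2-k_1)\geq1>0$.

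There is essentially no serious obstacle here: this is the same padding device already used in the proof of Proposition~\ref{prop2}, where setting variables equal to $1$ feeds the additive part without disturbing the multiplicative part. The only point requiring a moment's care is the ordering constraint, which is why I would prepend rather than append the ones; since every original $x_i\geq1$, inserting the new units at the front keeps the sequence nondecreasing. I would also remark that the padding map is injective on multisets (one recovers the original tuple by deleting $k_2-k_1$ leading ones), so in fact the stronger inequality $\nu_{k_2}(n+k_2-k_1)\geq\nu_{k_1}(n)$ holds, although only positivity is needed for the statement.
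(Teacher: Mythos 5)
Your proof is correct and is essentially identical to the paper's own argument: both pad the given $k_1$-representation with $k_2-k_1$ leading ones, leaving the product unchanged while adding $k_2-k_1$ to the sum. In fact your write-up is slightly cleaner, since you verify the ordering constraint explicitly and state the final value correctly as $n+k_2-k_1$ (the paper's displayed equation contains a typo, writing $n+x_{k_2}-x_{k_1}$).
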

\begin{proof}
By the condition, there exist $x_1,...,x_{k_1}$ such that
$$ n= x_1...x_{k_1}+x_1+...+x_{k_1},
 \enskip 1\leq x_1\leq ... \leq x_{k_1},\enskip k_1\geq3. $$
Set $y_i=1,\enskip i=1,...,k_2-k_1,$ and $y_{k_2-k_1+1}=x_1,...,y_{k_2}=x_{k_1}.$
Then we have
$$ y_1...y_{k_2}+y_1+...+y_{k_2} = x_1...x_{k_1} + k_2-k_1+x_1+...+x_{k_1}=n+
x_{k_2}-x_{k_1}.$$
\end{proof}
\begin{corollary}\label{cor1}
If $k_1<k_2$ and $\nu_{k_1}(n+k_1-3)>0,$ then $\nu_{k_2}(n+k_2-3)>0.$
\end{corollary}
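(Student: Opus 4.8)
The plan is to obtain the Corollary immediately from Proposition~\ref{prop3} by a single shift of the argument. Proposition~\ref{prop3} asserts that, whenever $k_1<k_2$, the condition $\nu_{k_1}(m)>0$ forces $\nu_{k_2}(m+k_2-k_1)>0$. The Corollary is nothing more than this statement evaluated at one particular value of $m$, so I would not reprove anything; I would simply specialize.

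Concretely, I would set $m=n+k_1-3$. The hypothesis of the Corollary is precisely $\nu_{k_1}(n+k_1-3)>0$, i.e.\ $\nu_{k_1}(m)>0$, so Proposition~\ref{prop3} applies verbatim and yields $\nu_{k_2}(m+k_2-k_1)>0$.

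The only remaining step is the arithmetic cancellation of $k_1$ in the argument, namely
\[
m+k_2-k_1=(n+k_1-3)+k_2-k_1=n+k_2-3,
\]
after which $\nu_{k_2}(m+k_2-k_1)>0$ reads exactly as $\nu_{k_2}(n+k_2-3)>0$, which is the desired conclusion.

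Since the whole argument reduces to invoking Proposition~\ref{prop3} and performing this one-line cancellation, there is no genuine obstacle. The single point I would double-check is that the shifted value $m=n+k_1-3$ is admissible for Proposition~\ref{prop3}, that is, that no auxiliary positivity or size restriction on $n$ is needed; but Proposition~\ref{prop3} imposes no hypothesis beyond $k_1<k_2$ and the existence of a representation of $m$, both of which are supplied by the assumptions of the Corollary, so the deduction goes through directly.
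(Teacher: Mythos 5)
Your proposal is correct and matches the paper's intent exactly: the Corollary is stated without proof precisely because it is the immediate specialization of Proposition~\ref{prop3} at $m=n+k_1-3$, with the same one-line cancellation $(n+k_1-3)+k_2-k_1=n+k_2-3$ that you carry out. Nothing further is needed, since Proposition~\ref{prop3} indeed imposes no hypotheses beyond $k_1<k_2$ and the existence of a representation.
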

\begin{corollary}\label{cor2}
If $k_1<k_2$ and $\nu_{k_2}(n+k_2-3)=0,$ then $\nu_{k_1}(n+k_1-3)=0.$
\end{corollary}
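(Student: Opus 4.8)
The plan is to recognize that Corollary~\ref{cor2} is nothing more than the logical contrapositive of Corollary~\ref{cor1}, so that no new computation is required. The two statements share the standing hypothesis $k_1 < k_2$; under this hypothesis Corollary~\ref{cor1} asserts the implication
\[
\nu_{k_1}(n+k_1-3) > 0 \;\Longrightarrow\; \nu_{k_2}(n+k_2-3) > 0,
\]
whereas Corollary~\ref{cor2} asserts
\[
\nu_{k_2}(n+k_2-3) = 0 \;\Longrightarrow\; \nu_{k_1}(n+k_1-3) = 0 .
\]

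The key observation is that $\nu_k$ counts representations and therefore takes values in the nonnegative integers; consequently, for any argument $m$, the statement $\nu_k(m) = 0$ is exactly the negation of $\nu_k(m) > 0$. Writing $P$ for the assertion $\nu_{k_1}(n+k_1-3) > 0$ and $Q$ for the assertion $\nu_{k_2}(n+k_2-3) > 0$, Corollary~\ref{cor1} is the implication $P \Rightarrow Q$, and Corollary~\ref{cor2} is precisely $\neg Q \Rightarrow \neg P$. Since an implication and its contrapositive are logically equivalent, the second follows at once from the first.

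For completeness I would record the chain of dependence: Corollary~\ref{cor1} itself is obtained from Proposition~\ref{prop3} by taking $n+k_1-3$ in the role of $n$, since then $(n+k_1-3)+(k_2-k_1) = n+k_2-3$. Thus Corollary~\ref{cor2} ultimately rests on the explicit construction of the tuple $(y_1,\dots,y_{k_2})$ given in the proof of Proposition~\ref{prop3}; one could, if desired, unwind the contrapositive and argue directly that any representation of $n+k_1-3$ by $F_{k_1}$ would force a representation of $n+k_2-3$ by $F_{k_2}$, contradicting the hypothesis $\nu_{k_2}(n+k_2-3)=0$. There is no genuine obstacle here, and indeed nothing to prove beyond a remark: the only points to keep in mind are that the hypothesis $k_1 < k_2$ is common to both corollaries and that, $\nu$ being a count, the conditions ``$=0$'' and ``not $>0$'' coincide.
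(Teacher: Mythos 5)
Your proof is correct and matches the paper's intent: the paper states Corollary~\ref{cor2} without proof precisely because it is the contrapositive of Corollary~\ref{cor1} (itself an instance of Proposition~\ref{prop3} under the substitution $n \mapsto n+k_1-3$), which is exactly the argument you give. Your added remarks on the standing hypothesis $k_1<k_2$ and on ``$=0$'' being the negation of ``$>0$'' for a counting function are accurate and complete the picture.
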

Note that, by Proposition \ref{prop2}, in Corollary \ref{cor2} the number $n$ is
prime.
\section{Cases $k=3$ and $k=4$}
Consider more detail the case $k=3,$ when
$$F_3=x_1x_2x_3+x_1+x_2+x_3, \enskip 1\leq x_1\leq x_2\leq x_3.$$
The numbers of ways to write the positive numbers by the form $F_3$ are given in
the sequence A260803 by D. A. Corneth. Note that, by Proposition \ref{prop2},
a number $n\geq2,$ could be not represented by $F_3$ only in case when $n$ is prime.
However, note that sequence of primes $p$ not represented by $F_3$ should grow fast
enough. Indeed, $p$ should not be a prime of the form
\begin{equation}\label{2}
(2t+1)m+(t+2),\enskip t,m\geq2,
\end{equation}
 where $t\equiv {0 \enskip or \enskip 2} \pmod 3.$
Indeed, in this case $p = x_1x_2x_3+x_1+x_2+x_3$ for $x_1=2,\enskip x_2=t, \enskip
 x_3=m,$ if $t\leq m,$ and for
 $x_1=2,\enskip x_2=m,\enskip x_3=t$ otherwise. Since $\gcd(2t+1, t+2)=\gcd(2(t+2)-3,
  t+2)=1,$ then, by Dirichlet's theorem, for any admissible $t\geq2,$ the progression
 (\ref{2}) contains infinitely many primes $p.$ For all these primes, $\nu_3(p) >0.$
\begin{question}\label{q1}
Is the sequence of primes $\{p\enskip|\enskip\nu_3(p)=0\}$ infinite$?$
\end{question}
However, in case of $k=4,$ in view of Corollary \ref{cor1}, to the set of
progressions (\ref{2}) one can add, for example, the following set of progressions
\begin{equation}\label{3}
(4t+1)m+(t+3),\enskip t,m\geq2.
\end{equation}
\newpage
Here $\gcd(4t+1, t+3)=\gcd(4(t+3)-11,t+3)=1,$ except for $t\equiv-3 \pmod{11}.$\enskip
Hence, for any admissible $t\geq2$ the
progression (\ref{3}) contains infinitely many primes $p.$ For such $p$ we have
$$p+k-3=p+1=2\cdot2tm+2+2+t+m=F_4$$
with $x_1=x_2=2,\enskip x_3=t,\enskip x_4=m,$ if $t\leq m,$ and $x_1=x_2=2, \enskip
 x_3=m, \enskip x_4=t,$ if $t>m.$
So for such $p,$ $\nu_4(p+1)>0.$
Therefore, and, by the observations in table in Corneth's sequence A260804 for $k=4,$
the following question has another tint.
\begin{question}\label{q2}
Is the sequence of primes $\{p\enskip|\enskip\nu_4(p+1)=0\}$ only finite$?$
\end{question}
\section{Smallest $k$ for representation of $prime+k-3$}
According to Proposition \ref{prop2}, if $m$ is not represented in the form $F_k,$
then $m-k+3$ is prime. Denote by $p_n$ the $n$-th prime. Let $m-k+3=p_n.$ Then, for
 every $n,$ it is interesting a question, for either smallest $k\geq3$ the number
 $p_n+k-3$ is represented by $F_k?$ Denote by $s(n),\enskip n\geq1,$ this smallest
 $k$ and let us write $s(n)=0,$ if $p_n+k-3$ is not represented by $F_k$ for any
 $k\geq3.$ The sequence $\{s(n)\}$ starts with the following terms (A260965):
 $$0, 0, 0, 0, 0, 0, 0, 3, 4, 3, 0, 0, 4, 0, 3, 0, 3, 3, 0, 4, 3, 3, 4, 3, $$
 \begin{equation}\label{4}
4, 0, 3, 5, 3, 4, 3, ... .
\end{equation}
\begin{conjecture}\label{con1}
The sequence $({4})$ contains only a finite number of zero terms.
\end{conjecture}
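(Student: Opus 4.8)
The plan is to first reduce the conjecture to a statement about a single prime. By Corollary~\ref{cor1} (padding a representation with extra variables equal to $1$), if $p_n+k-3$ is representable by $F_k$ for some $k$, then it is representable for all larger $k$ as well; hence $s(n)>0$ is equivalent to representability for \emph{some} $k\geq3$, and $s(n)$ is the least such $k$. Separating the variables equal to $1$ from those $\geq2$, a short computation shows that $\nu_k(p+k-3)>0$ holds for some $k$ if and only if there exist $r\geq3$ and integers $y_1,\dots,y_r\geq2$ with
\[
y_1\cdots y_r+y_1+\cdots+y_r=p+r-3 ,
\]
and in that case $s(n)$ is exactly the least admissible $r$. (The values $r=1,2$ are impossible for a prime $p$, since they force $2y_1=p-2$ and $(y_1+1)(y_2+1)=p$ respectively.) Thus Conjecture~\ref{con1} is equivalent to the assertion that all but finitely many primes $p$ admit such a representation for some $r\geq3$.

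Next I would extract clean sufficient conditions from factorization identities. Fixing one variable $y_1=c\geq2$ in the case $r=3$ and clearing denominators gives
\[
(cy_2+1)(cy_3+1)=c(p-c)+1 ,
\]
so $p$ is representable as soon as $c(p-c)+1$ has a divisor $d\equiv1\pmod c$ with $2c+1\leq d\leq\bigl(c(p-c)+1\bigr)/(2c+1)$. The case $c=2$ reads $2p-3=(2y_2+1)(2y_3+1)$. For a prime $p>3$ the number $2p-3$ is odd and coprime to $3$, so if it is composite its least prime factor is $\geq5$ and it automatically splits into two factors $\geq5$; hence the $c=2$ family already covers \emph{every} large prime except those in $E=\{p\ \text{prime}:2p-3\ \text{prime}\}$. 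More generally, using $a=r-2$ variables equal to $2$ together with two free variables $t,m$ yields (cf.\ Proposition~\ref{prop3}) the identity $2^{a}(p-a-1)+1=(2^{a}t+1)(2^{a}m+1)$, and varying $c$ above produces a doubly-infinite family of sufficient conditions indexed by the fixed variable $c$ and by $r$.

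The remaining, and decisive, task is to show that the union of all these conditions leaves out only finitely many primes. The set $E$ uncovered by $c=2$ is a Hardy--Littlewood constellation (the pair $p,\,2p-3$) expected to be infinite, so $c=2$ alone cannot suffice and the whole content of the conjecture lies in how the families for $c\geq3$ and $r\geq4$ interlock to mop up $E$. Concretely, a fully exceptional prime $p$ must make $2p-3$ prime and simultaneously force each of $3p-8,\,4p-15,\dots$ (and the higher $r$-analogues) to avoid every factorization meeting the required congruence-and-size constraints; I would attack this by combining sieve upper bounds, which show that imposing several such near-primality constraints cuts the density by extra factors of $\log$, with the congruence structure mod $c$ governing whether $c(p-c)+1$ splits into two factors $\equiv1\pmod c$. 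The hard part, and I expect the genuine obstacle, is the gap between ``density zero'' and ``finitely many'': sieve methods deliver the former but not the latter, and since even the single set $E$ is conjecturally infinite, no one family can finish. I would therefore first aim for the unconditional statement that the exceptional primes have density zero (indeed $O\!\left(x/(\log x)^2\right)$ by Brun's sieve applied to $E$), and obtain genuine finiteness only conditionally, e.g.\ under a suitable prime $k$-tuples hypothesis ensuring that for every large $p$ at least one of the values $c(p-c)+1$ has the required balanced factorization.
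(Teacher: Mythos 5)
The statement you were asked to prove is a conjecture that the paper itself leaves open: the paper offers no proof of Conjecture \ref{con1}, only the remark that an affirmative answer to Question \ref{q2} would immediately imply it. Measured against that, your work contains correct and even sharpened versions of the paper's partial results. Separating off the variables equal to $1$ and excluding $r=1,2$ is precisely the argument inside the proof of Proposition \ref{prop4} (and underlies the criterion of Proposition \ref{prop5}); the identity $(2y_2+1)(2y_3+1)=2p-3$ is the paper's progression (\ref{2}) restated; and your clean consequence --- every prime $p>3$ for which $2p-3$ is composite has $s(n)=3$, so the zero terms of (\ref{4}) can only come from primes $p$ such that $2p-3$ is also prime --- is a correct strengthening of what Section 3 of the paper gets from Dirichlet's theorem. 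Your general identities $(cy_2+1)(cy_3+1)=c(p-c)+1$ and $(2^a t+1)(2^a m+1)=2^a(p-a-1)+1$ also check out.

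The gap is the final step, and you name it yourself: it is not a technical detail but the entire content of the conjecture. What you have is a family of sufficient conditions (indexed by $c$ and $r$), each of which fails on a set of primes that is conjecturally infinite; what must be proved is that the intersection of all the failure sets is finite. Brun's sieve applied to the pair $p$, $2p-3$ gives an $O\!\left(x/(\log x)^2\right)$ bound on the exceptions, i.e., density zero; no sieve estimate of this shape can cross from density zero to finiteness, and invoking a prime-tuples hypothesis replaces the conjecture by a stronger unproved statement. So your proposal is a (correct) reduction of Conjecture \ref{con1} to another open problem --- exactly the same epistemic status as the paper's own remark that Question \ref{q2} implies it --- and not a proof. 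Nothing in your outline fails line by line; what is missing is any unconditional mechanism for passing from ``the exceptional primes are sparse'' to ``the exceptional primes are finite in number,'' and no such mechanism is currently known. You should present this as partial progress plus an honest statement that the conjecture remains open, not as a proof.
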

For example, a solution in affirmative of Question \ref{q2}, immediately proofs
Conjecture \ref{con1}.
Here we will concern only a question on estimates of $s(n).$
\begin{proposition}\label{prop4}
\begin{equation}\label{5}
s(n) \leq \lfloor(\log_2(p_n)\rfloor.
\end{equation}
\end{proposition}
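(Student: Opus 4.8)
The plan is to reduce the bound to a statement about the number of \emph{nontrivial} factors (those exceeding $1$) appearing in a representation, and then to observe that this number is automatically controlled by the size of the product. First I would dispose of the trivial case: if $s(n)=0$ there is nothing to prove, since $\lfloor\log_2 p_n\rfloor\ge 1\ge 0$ for every prime. So assume $s(n)\ge 3$, and set $p:=p_n$ (an odd prime), so that $p+k-3$ is representable by $F_k$ for some $k\ge 3$.

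Next comes the key reduction. I would take \emph{any} representation $p+k-3=x_1\cdots x_k+x_1+\cdots+x_k$ with $1\le x_1\le\cdots\le x_k$, let $r$ be the number of indices with $x_i\ge 2$, and write $y_1,\dots,y_r$ for those factors. Discarding the $k-r$ variables equal to $1$ leaves the product unchanged and decreases the sum by exactly $k-r$, so
$$ y_1\cdots y_r+y_1+\cdots+y_r=(p+k-3)-(k-r)=p+r-3,\qquad 2\le y_1\le\cdots\le y_r. $$
This is precisely the inverse of the padding operation in Proposition \ref{prop3} and Corollary \ref{cor1}: it asserts $\nu_r(p+r-3)>0$. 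A short check rules out small $r$ (for $r=2$ one gets $(y_1+1)(y_2+1)=p$, and for $r=1$ one gets $2y_1=p-2$, both impossible for an odd prime), so $r\ge 3$, and therefore by the very definition of $s(n)$ we have $s(n)\le r$.

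Finally I would invoke the size estimate that makes everything work. Since each $y_i\ge 2$, we have $y_1\cdots y_r\ge 2^r$ and $y_1+\cdots+y_r\ge 2r$; feeding these into the displayed identity gives
$$ 2^r\le y_1\cdots y_r=(p+r-3)-(y_1+\cdots+y_r)\le p-r-3<p. $$
Thus $2^r<p$. Because $2^{\lfloor\log_2 p\rfloor+1}>p$, any $r\ge\lfloor\log_2 p\rfloor+1$ would force $2^r>p$, a contradiction; hence $r\le\lfloor\log_2 p\rfloor$. Combining this with $s(n)\le r$ yields $s(n)\le\lfloor\log_2 p\rfloor$, as claimed.

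I expect the only genuinely delicate point to be the bookkeeping in the reduction step, namely verifying that stripping off the $1$'s really lands on the shifted target $p+r-3$ and that necessarily $r\ge 3$; the decisive inequality $2^r<p$ is then immediate once the nontrivial factors have been isolated. The guiding principle is simply that the product of the nontrivial factors can never exceed $p$, so a representation can use at most $\log_2 p$ of them.
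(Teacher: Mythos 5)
Your proof is correct and follows essentially the same route as the paper: both arguments isolate the variables equal to $1$, pass to a representation in which every factor is $\geq 2$, and then use $2^r \leq y_1\cdots y_r < p_n$ to bound the number of variables by $\lfloor\log_2 p_n\rfloor$. The only real difference is bookkeeping --- the paper fixes a minimal (``optimal'') $k$ and derives $x_i\geq 2$ by contradiction with minimality, while you strip the $1$'s from an arbitrary representation and explicitly rule out $r=1,2$; your explicit check of those degenerate cases actually closes a small gap in the paper's argument, whose appeal to optimality is only a genuine contradiction when the stripped representation still has at least $3$ variables.
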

\begin{proof}
Suppose, for a given $p_n,$ there exists $k$ such that $p_n + k - 3$ is represented
by the form $F_k.$ Then for the smallest possible $k$ such a representation we call
an \slshape optimal representation \upshape with a given $p_n.$ Let us show that in
an optimal representation all $x_i>=2.$ Indeed, let $x_1 = ... = x_u = 1$ and
$x_i >= 2$ for $u+1 <= i <= k,$ such that $p_n + k - 3 = x_{u+1}...x_k + u + x_{u+1}
 + ... + x_k$ be an optimal representation. Note that $u<k,$ otherwise $F_k = 1+k$
which is not $k-3 +$ prime. Set $k_1 = k - u; \enskip y_j = x_{u+j}.$ Then
 $p_n + k_1 - 3 = y_1...y_{k_1} + y_1 + ... + y_{k_1}.$ Since $k_1 < k,$ it
 contradicts the optimality of the form $F_k.$ The contradiction shows that all $x_i$
 in an optimal represen-
\newpage
 tation are indeed more than or equal 2. So for an optimal
 representation, $p_n + k - 3 = F_k >= 2^k + 2k$ and $2^k + k + 3 <= p_n.$ Hence
  $s(n) = k_{min} < \log_2(p_n)$ and the statement follows.
\end{proof}

Now we need a criterion for $s(n)>0.$
\begin{proposition}\label{prop5}
$s(n)>0$ if and only if either there exists $t_2\geq$ such that
$$B(t_2) = 2^{t_2} + t_2 + 3 = p_n$$
or there exist $t_2\geq0, t_3\geq1$ such that
$$B(t_2, t_3) = 2^{t_2}3^{t_3} + t_2 + 2t_3 + 3 = p_n$$
or there exist $t_2\geq0, t_3\geq0, t_4\geq1$ such that
$$B(t_2, t_3, t_4) = 2^{t_2}3^{t_3}4^{t_4} + t_2 + 2t_3 + 3t_4 + 3 = p_n,$$ etc.
\end{proposition}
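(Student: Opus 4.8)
The plan is to reduce the existence of \emph{any} representation of $p_n + k - 3$ by some $F_k$, $k \geq 3$, to a single normalized representation in which every variable is at least $2$, and then to read off the parameters $t_j$ of the Proposition as the multiplicities of the value $j$ among the $x_i$. The whole argument rests on the elementary identity
$$ F_k - k + 3 = x_1\cdots x_k + \sum_{i=1}^{k}(x_i - 1) + 3, $$
valid for every admissible tuple, together with the observation that its right-hand side is unchanged when a variable equal to $1$ is inserted or deleted (the product is multiplied by $1$, while such a variable contributes $0$ to the sum $\sum_i (x_i - 1)$).

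Using this invariance I would first prove the reformulation that $s(n) > 0$ holds if and only if there is a tuple $2 \leq x_1 \leq \cdots \leq x_m$, $m \geq 1$, with $p_n = x_1 \cdots x_m + \sum_{i=1}^m (x_i - 1) + 3$. For the forward direction I would take a representation $p_n + k - 3 = F_k$ and delete all variables equal to $1$, exactly as in the proof of Proposition \ref{prop4} (where it is shown that not all variables can be $1$); by the invariance just noted the surviving tuple still satisfies $x_1\cdots x_m + \sum_{i}(x_i-1) + 3 = p_n$ and has all entries $\geq 2$. For the converse I would start from such a tuple and pad it with ones until it has $k \geq 3$ entries, which again does not alter the quantity $x_1\cdots x_k + \sum_i (x_i-1)+3 = p_n$, thereby exhibiting a genuine $F_k$ with $p_n + k - 3 = F_k$ and so $s(n) > 0$.

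The remaining step is pure bookkeeping. Writing $r = x_m$ for the largest value occurring in the normalized tuple and $t_j = \#\{\, i : x_i = j\,\}$ for $2 \leq j \leq r$, so that $t_r \geq 1$, the product equals $\prod_{j=2}^{r} j^{t_j}$ and $\sum_i (x_i - 1) = \sum_{j=2}^{r}(j-1)t_j$. Substitution yields
$$ p_n = \prod_{j=2}^{r} j^{t_j} + \sum_{j=2}^{r}(j-1)t_j + 3 = B(t_2,\ldots,t_r), $$
which is precisely the alternative indexed by the maximal value $r$ (the case $r=2$ being $B(t_2)$, the case $r=3$ being $B(t_2,t_3)$, and so on). Reading the same computation backwards turns a solution of $B(t_2,\ldots,t_r) = p_n$ into the multiset consisting of $t_j$ copies of $j$, to which the normalized criterion applies; this closes the equivalence.

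The one point that needs care is the boundary constraint $k \geq 3$: the normalized tuple could a priori be too short to be a legitimate $F_k$. This is exactly what the padding-with-ones device handles, and in fact it turns out to be vacuous, since $p_n$ being prime forces $m \geq 3$ — a tuple of length $1$ produces the even number $2x_1 + 2$, and a tuple of length $2$ produces $(x_1+1)(x_2+1)$, which is composite. I therefore expect no genuine difficulty beyond arranging the insertion and deletion of ones so that the two sides of the equivalence line up without an off-by-one in $k$.
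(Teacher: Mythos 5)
Your proof is correct and follows essentially the same route as the paper: reduce to a representation in which every $x_i\geq 2$ (the paper borrows this normalization from the proof of Proposition \ref{prop4}), then read off the multiplicities $t_j$ of each value $j$ to obtain the expressions $B(t_2,\ldots,t_r)$ with $t_r\geq 1$. If anything, yours is the more complete version: the paper's proof only sketches the direction from a representation to a $B$-equation (ending in an ``etc.''), whereas you also make explicit the converse direction and the $k\geq 3$ boundary issue, both handled by your insertion/deletion-of-ones invariant.
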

\begin{proof}
Distinguish the following cases for $x_i \geq 2, i=1,...,k,$ and $F_k = x_1...x_k +
x_1 + ... + x_k:$\newline

(i) All $x_i = 2, i=1,...,t_2.$ Here $k=t_2$ and $F_k=2^{t_2} + 2t_2.$ If this is
$t_2 - 3 + p_n,$ then $p_n = 2^{t_2}+t_2+3 = B(t_2).$\newline
(ii) The first $t_2$ consecutive $x_i = 2$ and $t_3$ consecutive $x_i = 3.$ Note
that $t_3 \geq 1$ (otherwise, we have case (i)). Here $k=t_2+t_3$ and
$F_k=2^{t_2}3^{t_3} + 2t_2 + 3t_3.$ If this is $k - 3 + p_n = t_2 + t_3 -3 + p_n,$
then $p_n = 2^{t_2}3^{t_3} +
t_2 + 2t_3 + 3 = B(t_2, t_3),$ etc.
\end{proof}
Note that in the expressions $B(t_2), B(t_2, t_3),... $ defined in Proposition \ref{prop5},
we can consider only the case when the last variable is positive. Indeed, in $B(t_2), \enskip t_2\geq1$ and if $t_{j+1}=0,$ then, evidently, $B(t_2,...,t_j, 0)= B(t_2,...,t_j).$
\begin{corollary}\label{cor3}
 If $v<j$ is the smallest number such that, for some $t_2,...,t_v,t_j$
$B(t_2,...,t_v,t_j) = p_n,$ then $s(n) = t_2 + ... + t_v + t_j.$ If, for a given
 $n,$ for any $j$ there is no such $v,$ then $s(n)=0.$
\end{corollary}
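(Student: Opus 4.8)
The plan is to read the corollary straight off Proposition \ref{prop5}, combined with the deletion step already carried out in the proof of Proposition \ref{prop4}. The first thing I would make explicit is the dictionary that Proposition \ref{prop5} implicitly sets up: a representation $p_n+k-3=F_k$ in which all variables are $\geq 2$ is nothing but a choice of multiplicities $t_c$, where $t_c$ counts the indices $i$ with $x_i=c$ for $c=2,3,4,\ldots$ (only finitely many nonzero). Under this correspondence the number of variables is $k=\sum_{c\geq 2}t_c$, the product and sum of the $x_i$ are $\prod_{c\geq 2}c^{t_c}$ and $\sum_{c\geq 2}c\,t_c$, and the condition $p_n+k-3=F_k$ becomes
\[
\prod_{c\geq 2}c^{t_c}+\sum_{c\geq 2}(c-1)t_c+3=p_n,
\]
which is exactly $B(t_2,\ldots,t_j)=p_n$ in the notation of Proposition \ref{prop5}. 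Thus each solution of $B(t_2,\ldots,t_j)=p_n$ corresponds to a representation using $k=t_2+\cdots+t_j$ variables, and conversely.

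Next I would invoke the optimality argument from the proof of Proposition \ref{prop4}: if $p_n+k-3$ admits any representation by $F_k$, then deleting every variable equal to $1$ and lowering $k$ by their number yields a representation with all variables $\geq 2$ and with $k$ not increased. Hence the minimal admissible number of variables --- which by definition is $s(n)$ --- is attained by an all-$\geq 2$ representation, i.e.\ by one of the $B$-solutions. Combining this with the dictionary gives that $s(n)$ equals the minimum of $t_2+\cdots+t_j$ over all $j$ and all tuples with $B(t_2,\ldots,t_j)=p_n$, and the minimizing tuple is precisely the one the statement isolates, so that $s(n)=t_2+\cdots+t_v+t_j$ is just the identity $k=\sum_c t_c$ evaluated at the optimum. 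If, on the other hand, $B(t_2,\ldots,t_j)=p_n$ has no solution for any $j$, then by Proposition \ref{prop5} no all-$\geq 2$ representation of $p_n+k-3$ exists for any $k$; but any representation containing a variable equal to $1$ reduces, via the deletion step of Proposition \ref{prop4}, to an all-$\geq 2$ representation of smaller size, so in fact no representation exists for any $k\geq 3$, whence $s(n)=0$ by definition.

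The only point that genuinely needs care --- and the one I would treat as the crux --- is the passage from ``$s(n)$ is the smallest $k$ with a representation'' to ``$s(n)$ is the smallest $k$ with an all-$\geq 2$ representation.'' This is exactly where the deletion step of Proposition \ref{prop4} is indispensable: without it one could not exclude the possibility that the true optimum uses some variables equal to $1$, in which case it would fall outside the family of functions $B$. Once that reduction is in hand, the rest is bookkeeping --- matching the variable count of the optimal $F_k$-representation with the total multiplicity $t_2+\cdots+t_v+t_j$ of the corresponding $B$-solution, and checking that the empty-solution case forces $s(n)=0$ --- and I would not expect any computation beyond the identity $k=\sum_{c\geq 2}t_c$.
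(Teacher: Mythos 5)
Your proof is correct and follows essentially the same route as the paper: the paper gives no separate argument for Corollary \ref{cor3}, treating it as immediate from Proposition \ref{prop5} (whose proof is exactly your dictionary between all-$\geq 2$ representations and the functions $B$, with $k=\sum_{c\geq2}t_c$) combined with the reduction to all-$\geq 2$ representations carried out in the proof of Proposition \ref{prop4}. You have simply made explicit the two ingredients the paper combines implicitly, including the deletion of variables equal to $1$ and the bookkeeping identity between the variable count and the total multiplicity.
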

Practically, using this algorithm for different $j$ (cf. Section 5), we rather quickly
reduce the number of variables $t_i$ for the evaluation of $s(n).$
\section{Cases of $p_n=97$ and $p_n=101$}
Here we show that, for $p_{25}=97, p_{26}=101,$ we have $s(25)=4$ and $s(26)=0.$
Note that $B(0,0,...,0,t_j)=2(j+1)$ and, for $j\geq3,$ $B(t_2,0,...,0,t_j)=(2^{t_2}+1)j+t_2+2.$ For $t_2=1,,...,5,$ we have $3j+3,5j+4,9j+5,17j+6,33j+7$ respectively.
None of these expressions is equal to 97 or 101.
\newpage
 Further, for $j\geq4,$
$B(t_2,t_3,0,...,0,t_j)=(2^{t_2}3^{t_3}+1)j+t_2+2t_3+2.$ Here $t_2>0,$ otherwise we have even values. For $(t_2,t_3)=(1,1),(2,1),
(3,1),$ we have $7j+5,13j+6,25j+7$ respectively. None of these expressions is equal
to 97 or 101, expect for $13j+6=97$ for $j=7$ which corresponds to
$t_2=2, t_3=1,t_7=1.$ Hence, by Corollary \ref{cor3}, $s(25)=2+1+1=4.$
Continuing the research for $p=101,$ note that, for $j\geq5,$
$B(t_2,t_3,t_4,0,...,0,t_j)=(2^{t_2}3^{t_3}4^{t_4}+1)j+t_2+2t_3+3t_4+2.$ Here already
for: $(t_2,t_3,t_4)=(1,1,1)$  we have $25j+8>101.$ It completes the case
$t_j=1.$ In case $t_j=2$ we have $B(t_2,0,...,0,t_j)=2^{t_2}j^2+t_2+2(j-1)+3,\enskip
j\geq3.$ Here $t_2$ should be even (otherwise $B(t_2,0,...,0,t_j)$ is even).
For $t_2=2,4,$ we have $4j^2+2j+3,16j^2+2j+5.$ respectively. None of these expressions is equal 101. For $j\geq4,$ $B(t_2,t_3,0,...,0,t_j)=2^{t_2}3^{t_3}j^2+t_2+2t_3+2(j-1)+3$ is
$\geq108$ already for $t_2=t_3=1.$ Finally, in case $t_j\geq3,\enskip j\geq3$ we have $B(t_2,0,...,0,t_j)=64$ for $t_2=1,j=3, t_j=3$ and $>101$ otherwise. So, $s(26)=0.$

\end{document}